%\pretolerance1000
%\input{tcilatex}
%\input{tcilatex}
%\input{tcilatex}
%\input{tcilatex}
%\input{tcilatex}
%\input{tcilatex}
%\input{tcilatex}
%\input{tcilatex}
%\input{tcilatex}
%\input{tcilatex}

\documentclass[11pt]{amsart}
%%%%%%%%%%%%%%%%%%%%%%%%%%%%%%%%%%%%%%%%%%%%%%%%%%%%%%%%%%%%%%%%%%%%%%%%%%%%%%%%%%%%%%%%%%%%%%%%%%%%%%%%%%%%%%%%%%%%%%%%%%%%%%%%%%%%%%%%%%%%%%%%%%%%%%%%%%%%%%%%%%%%%%%%%%%%%%%%%%%%%%%%%%%%%%%%%%%%%%%%%%%%%%%%%%%%%%%%%%%%%%%%%%%%%%%%%%%%%%%%%%%%%%%%%%%%
\usepackage{eurosym}
\usepackage{graphicx,amscd,color,amsmath,amsfonts,amssymb,geometry}

\setcounter{MaxMatrixCols}{10}
%TCIDATA{OutputFilter=LATEX.DLL}
%TCIDATA{Version=5.50.0.2953}
%TCIDATA{<META NAME="SaveForMode" CONTENT="1">}
%TCIDATA{BibliographyScheme=Manual}
%TCIDATA{LastRevised=Wednesday, September 30, 2015 15:49:48}
%TCIDATA{<META NAME="GraphicsSave" CONTENT="32">}
%TCIDATA{ComputeDefs=
%$f(m)=\left( \sqrt{2}\right) ^{\left( m-1\right) \left( 1+\frac{2m-2}{%
%-m^{2}+m+2}\right) }\left( 2^{\frac{446381}{55440}-\frac{m}{2}%
%}\prod\limits_{j=14}^{m}\left( \frac{\Gamma \left( \frac{3}{2}-\frac{1}{j}%
%\right) }{\sqrt{\pi }}\right) ^{\frac{j}{2-2j}}\text{ }\right) ^{\frac{2m-2}{%
%m^{2}-m-2}}-2^{\frac{m^{2}-2m+3}{2m+2}}$
%$g(m)=\prod\limits_{j=2}^{m}\Gamma \left( 2-\frac{1}{j}\right) ^{\frac{j}{%
%2-2j}}$
%}

\newtheorem{theorem}{Theorem}[section]

\newtheorem{corollary}[theorem]{Corollary}

\numberwithin{equation}{section}

\geometry{left=2.5cm,right=2.5cm,top=3cm,bottom=3cm,headheight=3mm,paper=a4paper}

\begin{document}
\title[The optimal constants of the mixed $\left( \ell _{1},\ell _{2}\right)
$-Littlewood inequality]{The optimal constants of the mixed $\left( \ell
_{1},\ell _{2}\right) $-Littlewood inequality}
\author[D. Pellegrino ]{Daniel Pellegrino}
\address{Departamento de Matem\'{a}tica \\
\indent Universidade Federal da Para\'{\i}ba \\
\indent 58.051-900 - Jo\~{a}o Pessoa, Brazil.}
\email{pellegrino@pq.cnpq.br and dmpellegrino@gmail.com}
\thanks{The author is supported by CNPq Grant 401735/2013-3 - PVE - Linha 2
and INCT-Matem\'{a}tica.}
\subjclass[2010]{11Y60, 32A22, 47H60.}
\keywords{ Mixed $\left( \ell _{1},\ell _{2}\right) $-Littlewood inequality,
absolutely summing operators.}
\maketitle

\begin{abstract}
In this note, among other results, we find the optimal constants of the
generalized Bohnenblust--Hille inequality for $m$-linear forms over $\mathbb{%
R}$ and with multiple exponents $\left( 1,2,...,2\right) $, sometimes called
mixed $\left( \ell _{1},\ell _{2}\right) $-Littlewood inequality. We show
that these optimal constants are precisely $\left( \sqrt{2}\right) ^{m-1}$
and this is somewhat surprising since a series of recent papers have shown
that similar constants have a sublinear growth. This result answers a
question raised by Albuquerque \textit{et al.} in a paper published in 2014
in the \textit{Journal of Functional Analysis}.
\end{abstract}

\section{Introduction}

In the recent years a lot of papers (see, for instance, \cite{bohr, ann,
diniz, Nuuu} and the references therein) have been dedicated to the search
of best (or even optimal constants) for a class of famous inequalities,
including the Littlewood's $4/3$ inequality, the Bohnenblust--Hille
inequality and the multilinear Hardy--Littlewood inequality (see \cite{bh,
hardy, LLL}). The search of these constants, besides its intrinsic interest,
have been shown to be important in different areas of Mathematics and even
in Physics (see \cite{ann, monta}). In this paper we find the optimal
constants of a class of inequalities that encompasses the sometimes called
mixed $\left( \ell _{1},\ell _{2}\right) $-Littlewood inequality, which
plays an important role in the recent development of the theory related to
the Bohnenblust--Hille inequality.

\bigskip\ The Khinchine inequality (see \cite{Di}) asserts that for all $%
0<p<\infty $, there exist positive constants $A_{p}$ and $B_{p}$ such that
\begin{equation}
A_{p}\left( \sum_{n=1}^{N}\left\vert a_{n}\right\vert ^{2}\right) ^{\frac{1}{%
2}}\leq \left( \int_{0}^{1}\left\vert \sum_{n=1}^{N}a_{n}r_{n}\left(
t\right) \right\vert ^{p}dt\right) ^{\frac{1}{p}}\leq B_{p}\left(
\sum_{n=1}^{N}\left\vert a_{n}\right\vert ^{2}\right) ^{\frac{1}{2}}
\label{lpo}
\end{equation}%
for every positive integer $N$ and all real scalars $a_{1},...,a_{N}$ (here,
$r_{n}$ denotes the $n$-$th$ Rademacher function, which is defined in $[0,1]$
by $r_{n}(t)=sgn\left( \sin 2^{n+1}\pi t\right) $)$.$

The optimal constants of the Khinchine inequality are known. It is simple to
observe that the optimal value of $A_{p}$ is $1$ for all $p\geq 2$ and $%
B_{p}=1$ for all $p\leq 2.$ For real scalars, U. Haagerup (\cite{haage})
proved that the optimal constants $A_{p}$ are (see also \cite[page 23]{Di})
\begin{equation}
A_{p}=\frac{1}{\sqrt{2}}\left( \frac{\Gamma \left( \frac{p+1}{2}\right) }{%
\sqrt{\pi }}\right) ^{\frac{1}{p}},\ \ \text{ for }1.85\approx p_{0}<p<2
\label{kinreal}
\end{equation}%
and
\begin{equation}
A_{p}=2^{\frac{1}{2}-\frac{1}{p}},\ \ \text{ for }1\leq p\leq p_{0}\approx
1.85.  \label{kinreal2}
\end{equation}%
The exact definition of $p_{0}$ is the following: $p_{0}\in (1,2)$ is the
unique real number satisfying
\begin{equation*}
\Gamma \left( \frac{p_{0}+1}{2}\right) =\frac{\sqrt{\pi }}{2}.
\end{equation*}%
\bigskip

Note that the Khinchine inequality tells us that
\begin{equation}
\left( \int_{0}^{1}\left\vert \sum_{n=1}^{N}a_{n}r_{n}\left( t\right)
\right\vert ^{p}dt\right) ^{\frac{1}{p}}\leq B_{p}A_{r}^{-1}\left(
\int_{0}^{1}\left\vert \sum_{n=1}^{N}a_{n}r_{n}\left( t\right) \right\vert
^{r}dt\right) ^{\frac{1}{r}}  \label{kcc}
\end{equation}%
regardless of the $0<p,r<\infty .$ From now on, as usual, $c_{0}$ denotes
Banach space, endowed with the $\sup $ norm, of the sequences of scalars
converging to zero. If $U:c_{0}\times c_{0}\rightarrow \mathbb{R}$ is a
bilinear form, from the Khinchine inequality (and noting that from (\ref%
{kinreal2}) we have $A_{1}=2^{-1/2}$) we have, for all positive integers $N$,%
\begin{eqnarray*}
\sum\limits_{i=1}^{N}\left( \sum\limits_{j=1}^{N}\left\vert
U(e_{i},e_{j})\right\vert ^{2}\right) ^{\frac{1}{2}} &\leq &\sqrt{2}%
\sum\limits_{i=1}^{N}\int\limits_{0}^{1}\left\vert
\sum\limits_{j=1}^{N}r_{j}(t)U(e_{i},e_{j})\right\vert dt \\
&= &\sqrt{2}\int\limits_{0}^{1}\sum\limits_{i=1}^{N}\left\vert
U(e_{i},\sum\limits_{j=1}^{N}r_{j}(t)e_{j})\right\vert dt \\
&\leq &\sqrt{2}\sup_{t\in \left[ 0,1\right] }\sum\limits_{i=1}^{N}\left\vert
U(e_{i},\sum\limits_{j=1}^{N}r_{j}(t)e_{j})\right\vert \\
&\leq &\sqrt{2}\left\Vert U\right\Vert \sup_{t\in \left[ 0,1\right]
}\left\Vert \sum\limits_{j=1}^{N}r_{j}(t)e_{j}\right\Vert \\
&=&\sqrt{2}\left\Vert U\right\Vert .
\end{eqnarray*}

The above argument is part of the proof of the famous Littlewood's $4/3$
inequality \cite{LLL}, in a modern terminology (see \cite{garling}). The
exponent $4/3$ in Littlewood's $4/3$ inequality is a kind of interpolation
of mixed $\left( \ell _{1},\ell _{2}\right) $ and $\left( \ell _{2},\ell
_{1}\right) $ sums.

\bigskip By using a by now well known argument, one can generalize
Khinchine's inequality to multiple sums as follows (see, for instance \cite%
{bohr}): let $0<p<\infty $, and let $%
(a_{i_{1},...,i_{m}})_{i_{1},...,i_{m}=1}^{N}$ be a matrix of real scalars.
Then%
\begin{align*}
\left( A_{p}\right) ^{m}\left( \sum\limits_{i_{1},...,i_{m}=1}^{N}\left\vert
a_{i_{1}...i_{m}}\right\vert ^{2}\right) ^{1/2}& \leq \left(
\int\nolimits_{\lbrack 0,1]^{m}}\left\vert
\sum%
\limits_{i_{1},...,i_{m}=1}^{N}r_{i_{1}}(t_{1})...r_{i_{m}}(t_{m})a_{i_{1}...i_{m}}\right\vert ^{p}dt_{1}...dt_{m}\right) ^{1/p}
\\
& \leq \left( B_{p}\right) ^{m}\left(
\sum\limits_{i_{1},...,i_{m}=1}^{N}\left\vert a_{i_{1}...i_{m}}\right\vert
^{2}\right) ^{1/2}
\end{align*}%
for all positive integers $N$. Using this \textquotedblleft multiple
Khinchine inequality\textquotedblright , with some effort the mixed $\left(
\ell _{1},\ell _{2}\right) $-Littlewood inequality is obtained (see, for
instance, \cite{alb}, \cite[Lemma 2]{deff} or the proof of \cite[Proposition
3.1]{bohr}).

\begin{theorem}[Mixed $\left( \ell _{1},\ell _{2}\right) $-Littlewood
inequality]
For all real $m$-linear forms $U:c_{0}\times \cdots \times c_{0}\rightarrow
\mathbb{R}$ we have%
\begin{equation*}
\sum_{j_{1}=1}^{N}\left( \sum_{j_{2},...,j_{m}=1}^{N}\left\vert
U(e_{j_{1}},...,e_{j_{m}})\right\vert ^{2}\right) ^{\frac{1}{2}}\leq \left(
\sqrt{2}\right) ^{m-1}\left\Vert U\right\Vert
\end{equation*}%
for all positive integers $N.$
\end{theorem}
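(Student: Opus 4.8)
The plan is to imitate the bilinear computation carried out in the introduction, replacing the single use of Khinchine's inequality by the ``multiple Khinchine inequality'' recorded just above the statement, applied with exponent $p=1$. First I would fix an index $j_{1}\in\{1,\dots,N\}$ and apply that inequality in the $m-1$ variables $j_{2},\dots,j_{m}$ to the scalar array $\big(U(e_{j_{1}},e_{j_{2}},\dots,e_{j_{m}})\big)_{j_{2},\dots,j_{m}=1}^{N}$. Since the optimal Khinchine constant for real scalars and $p=1$ is $A_{1}=2^{-1/2}$ (Haagerup's theorem, see (\ref{kinreal2})), after dividing by $(A_{1})^{m-1}$ and using the multilinearity of $U$ this yields
\begin{equation*}
\Big(\sum_{j_{2},\dots,j_{m}=1}^{N}\big|U(e_{j_{1}},\dots,e_{j_{m}})\big|^{2}\Big)^{\frac{1}{2}}\leq(\sqrt{2})^{m-1}\int_{[0,1]^{m-1}}\Big|U\Big(e_{j_{1}},\textstyle\sum_{j_{2}=1}^{N}r_{j_{2}}(t_{2})e_{j_{2}},\dots,\sum_{j_{m}=1}^{N}r_{j_{m}}(t_{m})e_{j_{m}}\Big)\Big|\,dt_{2}\cdots dt_{m}.
\end{equation*}

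Next I would sum this over $j_{1}=1,\dots,N$ and interchange the (finite) sum with the integral. For each fixed $(t_{2},\dots,t_{m})\in[0,1]^{m-1}$, write $x_{k}:=\sum_{j_{k}=1}^{N}r_{j_{k}}(t_{k})e_{j_{k}}\in c_{0}$ for $k=2,\dots,m$, and choose signs $\varepsilon_{j_{1}}=\pm1$ so that $\varepsilon_{j_{1}}U(e_{j_{1}},x_{2},\dots,x_{m})=|U(e_{j_{1}},x_{2},\dots,x_{m})|$. Then, by linearity in the first slot,
\begin{equation*}
\sum_{j_{1}=1}^{N}\big|U(e_{j_{1}},x_{2},\dots,x_{m})\big|=U\Big(\sum_{j_{1}=1}^{N}\varepsilon_{j_{1}}e_{j_{1}},x_{2},\dots,x_{m}\Big)\leq\|U\|,
\end{equation*}
since $\sum_{j_{1}}\varepsilon_{j_{1}}e_{j_{1}}$ and each $x_{k}$ have sup-norm at most $1$ in $c_{0}$. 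Integrating this constant bound over $[0,1]^{m-1}$ leaves the factor $(\sqrt{2})^{m-1}$ untouched, and the asserted inequality falls out.

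I do not expect a genuine obstacle here: the whole argument is an assembly of standard ingredients (multiple Khinchine, Fubini for finite sums, the sign trick, the definition of $\|U\|$). The one point that must be handled with care — and the reason the constant comes out exactly as $(\sqrt{2})^{m-1}$ rather than something larger — is that one has to feed in the \emph{sharp} value $A_{1}=2^{-1/2}$ of the $L_{1}$-Khinchine constant for real scalars; any cruder estimate would enlarge the base of the exponent. The case $m=1$ is the trivial statement $\sum_{j_{1}}|U(e_{j_{1}})|\leq\|U\|$, again by the sign trick, and one could instead present the argument as an induction on $m$, peeling off one variable at a time; but the direct route above is the shortest.
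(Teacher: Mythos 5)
Your argument is correct and is essentially the proof the paper has in mind: the paper only sketches this theorem by citing the multiple Khinchine inequality (with the sharp $A_{1}=2^{-1/2}$) together with the bilinear computation in the introduction, and your write-up simply carries out that same scheme in full — apply the $(m-1)$-fold Khinchine inequality in the last $m-1$ slots, exchange the finite sum over $j_{1}$ with the integral, and use the sign trick plus $\left\Vert U\right\Vert$ on sup-norm-one vectors. No gaps; the only point needing care, which you correctly flag, is using the optimal constant $A_{1}$ to get exactly $\left(\sqrt{2}\right)^{m-1}$.
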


\bigskip In this note, among other results, we prove that the constants $%
\left( \sqrt{2}\right) ^{m-1}$ are sharp. This answers a question raised in
\cite{alb} and closes the possibility that the optimal constants associated
to the generalized Bohnenblust--Hille inequality have all sublinear growth
(see Section \ref{ytr}), contradicting what could be at a first glance
predicted from a series of recent related results (see \cite{alb, n, ap,
bohr, diniz, Nuuu}).

\bigskip

\section{The optimal constants of the mixed $\left( \ell _{1},\ell
_{2}\right) $-Littlewood inequality}

\bigskip \bigskip In \cite[Remark 5.1]{alb} the following sentence raises a
natural question: \textquotedblleft It would be nice to know if the
constants in the mixed $\left( \ell _{1},\ell _{2}\right) $-Littlewood
inequality can also be chosen to be subpolynomial.\textquotedblright\ Our
next result shows that these constants can not be subpolynomial for real
scalars, since we prove that the optimal values are $\left( \sqrt{2}\right)
^{m-1}.$ This is somewhat surprising since various recent papers (see \cite%
{alb, n, ap, bohr, diniz, Nuuu}) have shown that similar constants in the
theory of multilinear forms (see details in the next section) have a
subpolynomial (sublinear) growth.

\begin{theorem}
\label{kjh}\bigskip The optimal constants of mixed $\left( \ell _{1},\ell
_{2}\right) $-Littlewood inequality for real $m$-linear forms are $\left(
\sqrt{2}\right) ^{m-1}.$
\end{theorem}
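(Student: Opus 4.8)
The upper bound $\left(\sqrt{2}\right)^{m-1}$ is already guaranteed by the Mixed $\left(\ell_1,\ell_2\right)$-Littlewood inequality stated above, so the whole task is to exhibit, for each $m$, a sequence of $m$-linear forms showing that no smaller constant works. The plan is to build such forms inductively from a single well-chosen bilinear form and then iterate a tensor-type construction that multiplies both the relevant $\ell_1(\ell_2)$-norm of the coefficients and the sup-norm by the same factor $\sqrt{2}$ at each step. Concretely, I would first observe that the extremal behaviour in Littlewood's $4/3$ inequality is witnessed by the $2\times 2$ matrix coming from the Rademacher functions, i.e. the bilinear form $U_2$ on $c_0\times c_0$ whose matrix of coefficients is $\left(\begin{smallmatrix}1&1\\1&-1\end{smallmatrix}\right)$ (equivalently $U_2(x,y)=x_1y_1+x_1y_2+x_2y_1-x_2y_2$). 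For this form one has $\left\|U_2\right\|=2$ while $\sum_{i}\left(\sum_j\left|U_2(e_i,e_j)\right|^2\right)^{1/2}=2\sqrt{2}$, so the ratio is exactly $\sqrt{2}$, matching the constant for $m=2$.

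Next I would set up the inductive step. Suppose $U_{m-1}:c_0^{m-1}\to\mathbb{R}$ attains ratio $\left(\sqrt{2}\right)^{m-2}$, say with coefficients supported on indices in $\{1,2\}^{m-1}$. Define an $m$-linear form $U_m$ on $c_0^m$ by a "twisted product'' with $U_2$: roughly, index the first $m-1$ slots by pairs and use the last slot (ranging over $\{1,2\}$) together with the first slot to insert the $U_2$-pattern, so that
\[
U_m(x^{(1)},\dots,x^{(m)})=\sum_{\substack{j_1,\dots,j_m\in\{1,2\}}} c_{j_1\cdots j_{m-1}}\,\varepsilon_{j_1 j_m}\,x^{(1)}_{j_1}\cdots x^{(m)}_{j_m},
\]
where $c$ are the coefficients of $U_{m-1}$ and $\varepsilon_{j_1 j_m}$ is the sign matrix of $U_2$. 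One checks (i) the norm: $\left\|U_m\right\|=\sqrt{2}\,\left\|U_{m-1}\right\|$, because for any fixed unit vectors in slots $2,\dots,m-1$ the remaining bilinear form in slots $1$ and $m$ is, up to the scalars $c$, a copy of $U_2$ acting after the $U_{m-1}$-optimization, and Khinchine/Rademacher orthogonality gives the clean factor $\sqrt 2$; (ii) the coefficient norm: $\sum_{j_1}\left(\sum_{j_2,\dots,j_m}\left|U_m(e_{j_1},\dots,e_{j_m})\right|^2\right)^{1/2}=\sqrt{2}\,\cdot\sum_{j_1}\left(\sum_{j_2,\dots,j_{m-1}}\left|c_{j_1\cdots j_{m-1}}\right|^2\right)^{1/2}$, since for each fixed $(j_1,\dots,j_{m-1})$ summing $\left|\varepsilon_{j_1 j_m}\right|^2$ over $j_m\in\{1,2\}$ contributes a factor $2$ inside the square root, hence $\sqrt2$ outside. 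Dividing, the ratio for $U_m$ is $\sqrt2$ times the ratio for $U_{m-1}$, i.e. $\left(\sqrt2\right)^{m-1}$.

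Finally I would let the ambient dimension $N\to\infty$ (in fact $N=2$ already suffices since everything is supported on $\{1,2\}^m$) and conclude that the constant $\left(\sqrt2\right)^{m-1}$ in the Mixed $\left(\ell_1,\ell_2\right)$-Littlewood inequality cannot be improved, which together with the upper bound proves optimality. The main obstacle I anticipate is step (i): verifying that the norm of $U_m$ is exactly $\sqrt2\,\left\|U_{m-1}\right\|$ rather than merely $\le$ or $\ge$ this value. The lower bound $\ge$ is easy by plugging in the obvious extremizers (Rademacher vectors in the new slot), but the matching upper bound requires an argument that, after testing on arbitrary unit vectors, the worst case decouples so that the last variable only sees the $2$-dimensional sign matrix; here one invokes the sharp Khinchine constant $A_1=2^{-1/2}$ exactly as in the computation displayed before the statement of the Mixed $\left(\ell_1,\ell_2\right)$-Littlewood inequality, so the factor is pinned down and not just estimated. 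Getting this decoupling clean — possibly by choosing the combinatorial "twist'' carefully so that the variables genuinely separate — is the crux of the argument.
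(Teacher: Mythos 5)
Your base case and the coefficient-norm computation (ii) are fine, but the inductive norm claim (i) --- which you yourself identify as the crux --- is false, and it cannot be repaired inside your framework. Run your recipe once, at $m=3$: with $c$ the coefficient matrix of $U_2$ and $\varepsilon$ its sign matrix, your displayed formula gives
\[
U_3(x,y,z)=x_1(y_1+y_2)(z_1+z_2)+x_2(y_1-y_2)(z_1-z_2),
\]
and evaluating at $x=y=z=(1,1,0,0,\dots)$ yields the value $4$, so $\Vert U_3\Vert =4=2\Vert U_2\Vert$, not $\sqrt2\,\Vert U_2\Vert$. Since the mixed norm of $U_3$ equals $4$, the ratio for $U_3$ is $1$, i.e.\ worse than the base case. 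The sharp Khinchine constant $A_1=2^{-1/2}$ cannot rescue this: it enters the upper-bound proof of the inequality and does not bound the norm of a concrete form from above; here the extremizer $z=(1,1)$ simply does not decouple. In fact no choice of ``twist'' can work under your standing assumption that ``$N=2$ already suffices'': if all coefficients are unimodular and supported on $\{1,2\}^m$, the mixed norm is frozen at $2\cdot 2^{(m-1)/2}$, so reaching $(\sqrt2)^{m-1}$ would force $\Vert U_m\Vert\le 2$; but every $\pm1$ tensor on $\{1,2\}^3$ has norm at least $4$ (consider the matrices $A_1\pm A_2$ obtained by plugging $z=(1,\pm1)$: their nonzero entries are $\pm2$ and together fill all four positions, so one of them has at least two nonzero entries and hence bilinear norm $\ge 4$), and fixing basis vectors in the remaining slots gives $\Vert U_m\Vert\ge 4$ for all $m\ge3$. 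So your family stays a factor $2$ away from the target for every $m\ge 3$.

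The missing idea is that the extremal forms must live in growing dimension, with the two ``halves'' supported on disjoint blocks of coordinates. The paper's construction (following Diniz, Mu\~noz-Fern\'andez, Pellegrino and Seoane-Sep\'ulveda) keeps your $T_2$ but sets
$T_m(x_1,\dots,x_m)=(x_m^1+x_m^2)\,T_{m-1}(x_1,\dots,x_{m-1})+(x_m^1-x_m^2)\,T_{m-1}(\text{backward-shifted arguments})$,
the shifts forcing the two copies of $T_{m-1}$ to act on disjoint coordinates of $x_1,\dots,x_{m-1}$, so that $N=2^{m-1}$. For fixed $x_1,\dots,x_{m-1}$ the last variable then sees $a(t_1+t_2)+b(t_1-t_2)$ with $a,b$ values of $T_{m-1}$ on disjoint blocks, whence $\Vert T_m\Vert=2\Vert T_{m-1}\Vert$ (because $|a+b|+|a-b|=2\max(|a|,|b|)$, and both blocks can be made extremal simultaneously), while the outer $\ell_1$ index set in the first variable doubles and each inner $\ell_2$ block gains a factor $\sqrt2$, so the mixed norm gains $2\sqrt2$. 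It is precisely this doubling of the $\ell_1$-support at each step --- impossible in your fixed $2\times\dots\times2$ tensor --- that produces the net gain of $\sqrt2$ per step and hence the lower bound $(\sqrt2)^{m-1}$.
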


\begin{proof}
Let us denote the optimal constants of the mixed $\left( \ell _{1},\ell
_{2}\right) $-Littlewood inequality for real $m$-linear forms by $%
C_{m,\infty }$. From the previous section we know that the optimal constants
are not bigger than $\left( \sqrt{2}\right) ^{m-1}.$ So it suffices to show
that $\left( \sqrt{2}\right) ^{m-1}$ is a lower bound. The proof is done by
induction and follows the lines of the proof of the main result of \cite%
{diniz}.

For $m=2$, let $T_{2}:c_{0}\times c_{0}\rightarrow \mathbb{R}$ be defined by
\begin{equation*}
T_{2}(x,y)=x_{1}y_{1}+x_{1}y_{2}+x_{2}y_{1}-x_{2}y_{2}.
\end{equation*}%
The signal minus before $x_{2}y_{2}$ is strategic to make the norm of $T_{2}$
small (for our purposes). It is not difficult to prove that $\left\Vert
T_{2}\right\Vert =2.$ Since
\begin{equation*}
\sum\limits_{i_{1}=1}^{2}\left( \sum\limits_{i_{2}=1}^{2}\left\vert
T_{2}(e_{i_{^{1}}},e_{i_{2}})\right\vert ^{2}\right) ^{\frac{1}{2}}=2\sqrt{2}
\end{equation*}%
we conclude that%
\begin{equation*}
C_{2,\infty }\geq \frac{2\sqrt{2}}{2}=\left( \sqrt{2}\right) ^{2-1}.
\end{equation*}

For $m=3$ let $T_{3}:c_{0}\times c_{0}\times c_{0}\rightarrow \mathbb{R}$ be
given by
\begin{equation*}
T_{3}(x,y,z)=(z_{1}+z_{2})\left(
x_{1}y_{1}+x_{1}y_{2}+x_{2}y_{1}-x_{2}y_{2}\right) +(z_{1}-z_{2})\left(
x_{3}y_{3}+x_{3}y_{4}+x_{4}y_{3}-x_{4}y_{4}\right)
\end{equation*}%
and note that $\Vert T_{3}\Vert =4$. Since
\begin{equation*}
\sum\limits_{i_{1}=1}^{4}\left( \sum\limits_{i_{2},i_{3}=1}^{4}\left\vert
T_{3}(e_{i_{^{1}}},e_{i_{2}},e_{i_{3}})\right\vert ^{2}\right) ^{\frac{1}{2}%
}=4\sqrt{4}
\end{equation*}%
we have
\begin{equation*}
C_{3,\infty }\geq \frac{4\sqrt{4}}{4}=\left( \sqrt{2}\right) ^{3-1}.
\end{equation*}%
In the case $m=4$ we consider $T_{4}:c_{0}\times c_{0}\times c_{0}\times
c_{0}\rightarrow \mathbb{R}$ given by
\begin{align*}
& T_{4}(x,y,z,w)= \\
& =\left( w_{1}+w_{2}\right) \left(
\begin{array}{c}
(z_{1}+z_{2})\left( x_{1}y_{1}+x_{1}y_{2}+x_{2}y_{1}-x_{2}y_{2}\right) \\
+(z_{1}-z_{2})\left( x_{3}y_{3}+x_{3}y_{4}+x_{4}y_{3}-x_{4}y_{4}\right)%
\end{array}%
\right) \\
& +\left( w_{1}-w_{2}\right) \left(
\begin{array}{c}
(z_{3}+z_{4})\left( x_{5}y_{5}+x_{5}y_{6}+x_{6}y_{5}-x_{6}y_{6}\right) \\
+(z_{3}-z_{4})\left( x_{7}y_{7}+x_{7}y_{8}+x_{8}y_{7}-x_{8}y_{8}\right) .%
\end{array}%
\right)
\end{align*}%
and a similar argument shows that $\left\Vert T_{4}\right\Vert =8$ and
\begin{equation*}
\sum\limits_{i_{1}=1}^{8}\left(
\sum\limits_{i_{2},i_{3},i_{4}=1}^{8}\left\vert
T_{4}(e_{i_{^{1}}},e_{i_{2}},e_{i_{3}},e_{i_{4}})\right\vert ^{2}\right) ^{%
\frac{1}{2}}=8\sqrt{8}.
\end{equation*}%
Therefore
\begin{equation*}
C_{4,\infty }\geq \frac{8\sqrt{8}}{8}=\left( \sqrt{2}\right) ^{4-1}.
\end{equation*}%
The general case is proved by induction. Define the $m$-linear forms $%
T_{m}:c_{0}\times \overset{m}{\ldots }\times c_{0}\rightarrow {\mathbb{R}}$
by induction as
\begin{align*}
T_{m}(x_{1},\ldots ,x_{m})=& (x_{m}^{1}+x_{m}^{2})T_{m-1}(x_{1},\ldots
,x_{m-1}) \\
&
+(x_{m}^{1}-x_{m}^{2})T_{m-1}(B^{2^{m-2}}(x_{1}),B^{2^{m-2}}(x_{2}),B^{2^{m-3}}(x_{3})\ldots ,B^{2}(x_{m-1})),
\end{align*}%
where $x_{k}=(x_{k}^{n})_{n}\in c_{0}$ for $1\leq k\leq m$, $1\leq n\leq
2^{m-1}$ and $B$ is the backward shift operator in $c_{0}$. Then $\Vert
T_{m}\Vert =2^{m-1}$ for all $m\in {\mathbb{N}}$ and the proof follows
straightforwardly.
\end{proof}

\section{The general Bohnenblust--Hille inequality\label{ytr}}

\bigskip The mixed $\left( \ell _{1},\ell _{2}\right) $-Littlewood
inequality is a particular instance of the generalized Bohnenblust--Hille
inequality for real scalars:

\textbf{Theorem (Generalized Bohnenblust--Hille inequality for real scalars,
(\cite{alb}, 2014)). }\textit{Let }$m\geq 2$ be a positive integer, \textit{%
and} $\mathbf{q}:=(q_{1},...,q_{m})\in \left[ 1,2\right] ^{m}$ be \textit{%
such that}%
\begin{equation*}
\frac{1}{q_{1}}+\cdots +\frac{1}{q_{m}}=\frac{m+1}{2}.
\end{equation*}%
\textit{Then there exists a constant} $C_{m,\infty ,\mathbf{q}}\geq 1$
\textit{such that}%
\begin{equation}
\left( \sum_{j_{1}=1}^{N}\left( \sum_{j_{2}=1}^{N}\left( \cdots \left(
\sum_{j_{m}=1}^{N}\left\vert T(e_{j_{1}},...,e_{j_{m}})\right\vert
^{q_{m}}\right) ^{\frac{q_{m-1}}{q_{m}}}\cdots \right) ^{\frac{q_{2}}{q_{3}}%
}\right) ^{\frac{q_{1}}{q_{2}}}\right) ^{\frac{1}{q_{1}}}\leq C_{m,\infty ,%
\mathbf{q}}\left\Vert T\right\Vert  \label{gbh}
\end{equation}%
\textit{for all continuous }$m$\textit{--linear forms }$T:c_{0}\times \cdots
\times c_{0}\rightarrow \mathbb{R}$\textit{\ and all positive integers }$N.$%
\textit{\ }

Henceforth $\mathbf{q}:=(q_{1},...,q_{m})$ will be called a multiple
exponent. \medskip A natural question is whether the approach of the
previous section can give the optimal constants for other multiple exponents
of the generalized Bohnenblust--Hille inequality. We do not know the answer
but, if compared to the best known upper bounds, the values are different,
in general. The case of the classical exponents is more evident since the
lower bounds for the constants associated to $\left( \frac{2m}{m+1},...,%
\frac{2m}{m+1}\right) $ with real scalars are $2^{1-\frac{1}{m}}$ \ (and
thus not bigger than $2$) while the best known upper bounds are of the order
$m^{0.36482}$ (see \cite{bohr}).\

The same argument of the proof of Theorem \ref{kjh} furnishes the following
theorem:

\begin{theorem}
Let $\alpha \in \lbrack 1,2]$ be a constant and $\mathbf{q}=(\alpha ,\beta
_{m},...,\beta _{m})$ be a multiple exponent of the generalized
Bohnenblust--Hille inequality for real scalars. Then
\begin{equation*}
C_{m,\infty ,\mathbf{q}}\geq 2^{\frac{2m-\alpha m-4+3\alpha }{2\alpha }}
\end{equation*}
\end{theorem}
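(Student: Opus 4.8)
The plan is to mimic the inductive construction of the $m$-linear forms $T_m$ from the proof of Theorem \ref{kjh}, but to start the induction at the two-dimensional ``Littlewood-type'' form whose output sum is measured in the $\ell_\alpha$-norm in the first coordinate rather than the $\ell_1$-norm, and then to multiply in the remaining $m-2$ coordinates exactly as before. Concretely, let $T_2:c_0\times c_0\to\mathbb R$ be given by $T_2(x,y)=x_1y_1+x_1y_2+x_2y_1-x_2y_2$ as in the proof of Theorem \ref{kjh}; we have $\|T_2\|=2$, and the four numbers $|T_2(e_i,e_j)|$ are all equal to $1$, so the relevant quantity for the exponent $(\alpha,\beta_m)$ is
\begin{equation*}
\left(\sum_{i=1}^{2}\left(\sum_{j=1}^{2}|T_2(e_i,e_j)|^{\beta_m}\right)^{\alpha/\beta_m}\right)^{1/\alpha}=\left(2\cdot 2^{\alpha/\beta_m}\right)^{1/\alpha}=2^{1/\alpha}\cdot 2^{1/\beta_m}.
\end{equation*}
Here $\beta_m$ is determined by the exponent condition $\frac1\alpha+\frac{m-1}{\beta_m}=\frac{m+1}{2}$, i.e. $\frac{1}{\beta_m}=\frac{1}{m-1}\left(\frac{m+1}{2}-\frac1\alpha\right)$.

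Next I would propagate this to general $m$ using the same doubling/backward-shift construction as in Theorem \ref{kjh}: define $T_m$ on $c_0\times\overset{m}{\cdots}\times c_0$ (with the first two coordinates living in $2^{m-1}$ dimensions, as in the given proof) by
\begin{equation*}
T_m(x_1,\ldots,x_m)=(x_m^1+x_m^2)T_{m-1}(x_1,\ldots,x_{m-1})+(x_m^1-x_m^2)T_{m-1}(B^{2^{m-2}}x_1,B^{2^{m-2}}x_2,B^{2^{m-3}}x_3,\ldots,B^2 x_{m-1}),
\end{equation*}
so that $\|T_m\|=2\|T_{m-1}\|=2^{m-1}$ (the two inner forms act on disjoint blocks of coordinates, so the supremum splits) and the nonzero coefficients of $T_m$ number $4^{m-1}$, all with absolute value $1$. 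Computing the mixed sum for the exponent $\mathbf q=(\alpha,\beta_m,\ldots,\beta_m)$: summing over $j_2,\ldots,j_m$ in the $\ell_{\beta_m}$-norm gives, for each of the $2^{m-1}$ choices of $j_1$, the value $(2^{2(m-1)})^{1/\beta_m}\cdot$(number of $j_1$'s contributing)$^0$... more carefully, for each fixed $j_1$ exactly $4^{m-1}/2^{m-1}=2^{m-1}$ of the coefficients are nonzero, so the inner $\ell_{\beta_m}$-sum equals $(2^{m-1})^{1/\beta_m}$, and then the outer $\ell_\alpha$-sum over the $2^{m-1}$ values of $j_1$ gives $(2^{m-1})^{1/\alpha}\cdot(2^{m-1})^{1/\beta_m}$. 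Dividing by $\|T_m\|=2^{m-1}$ yields
\begin{equation*}
C_{m,\infty,\mathbf q}\geq (2^{m-1})^{\frac1\alpha+\frac1{\beta_m}-1}.
\end{equation*}
Finally I would substitute $\frac{1}{\beta_m}=\frac{1}{m-1}\left(\frac{m+1}{2}-\frac1\alpha\right)$ and simplify the exponent $(m-1)\left(\frac1\alpha+\frac1{\beta_m}-1\right)=(m-1)\left(\frac1\alpha-1\right)+\frac{m+1}{2}-\frac1\alpha$, which after collecting terms over the common denominator $2\alpha$ should reduce to exactly $\frac{2m-\alpha m-4+3\alpha}{2\alpha}$, giving the claimed bound.

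The main obstacle — really a bookkeeping point rather than a deep difficulty — is verifying rigorously that $\|T_m\|=2^{m-1}$ and that the coefficient-counting in the mixed sum is exactly right, i.e. that for each fixed value of the first index precisely $2^{m-1}$ of the remaining $(m-1)$-tuples of indices give a nonzero (hence unit-modulus) coefficient. Both facts follow from the block structure of the construction: the backward shifts $B^{2^{m-2}}$ etc. guarantee that the two $T_{m-1}$-terms in the recursion involve disjoint sets of basis vectors in every coordinate, so the norm computation splits as a sum of two independent pieces evaluated at $\pm1$ Rademacher-type signs (giving the factor $2$ at each step), and the nonzero coefficients of $T_m$ are exactly the disjoint union of two shifted copies of those of $T_{m-1}$, doubling their number at each step while preserving the uniform distribution across the first index. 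One should also check the trivial constraint $\alpha\le\beta_m\le 2$ needed for $\mathbf q$ to be an admissible multiple exponent, which holds precisely because $\alpha\in[1,2]$ and $m\ge2$.
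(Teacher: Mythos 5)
Your proposal is correct and follows essentially the same route as the paper's own proof: you plug the very same forms $T_m$ from Theorem \ref{kjh} into the inequality, count $2^{m-1}$ unit coefficients per fixed $j_1$ over $2^{m-1}$ values of $j_1$, divide by $\Vert T_m\Vert =2^{m-1}$ to get $C_{m,\infty ,\mathbf{q}}\geq \left( 2^{m-1}\right) ^{\frac{1}{\alpha }+\frac{1}{\beta _{m}}-1}$, and the algebra does reduce to $2^{\frac{2m-\alpha m-4+3\alpha }{2\alpha }}$. One harmless side remark: admissibility only requires $\beta _{m}\in \lbrack 1,2]$ together with the sum condition, not $\alpha \leq \beta _{m}$ (which in fact fails when $\alpha >\frac{2m}{m+1}$), but this does not affect your argument.
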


\begin{proof}
Note that%
\begin{equation*}
\beta _{m}=\frac{2\alpha m-2\alpha }{\alpha m-2+\alpha }.
\end{equation*}%
Plugging the $m$-linear forms defined in the proof of Theorem \ref{kjh} into
the Bohnenblust--Hille inequality we obtain%
\begin{equation*}
C_{m,\infty ,\mathbf{q}}\geq \frac{\left( 2^{m-1}\left( 2^{m-1}\right) ^{%
\frac{\alpha m-2+\alpha }{2\alpha m-2\alpha }\cdot \alpha }\right) ^{\frac{1%
}{\alpha }}}{2^{m-1}}=2^{\frac{2m-\alpha m-4+3\alpha }{2\alpha }}.
\end{equation*}
\end{proof}

Note that when $\alpha <2$ the above theorem shows that the respective
optimal constants have an exponential growth. The following corollary shows
that the result is sharp in the sense that when $\alpha =2$ the growth is
sublinear.

\begin{corollary}
Let $\alpha \in \lbrack 1,2]$ be a constant and $\mathbf{q}=(\alpha ,\beta
_{m},...,\beta _{m})$ be a multiple exponent of the generalized
Bohnenblust--Hille inequality for real scalars. Then the optimal constants
associated to $\mathbf{q}$ have an exponential growth if and only if $\alpha
<2.$
\end{corollary}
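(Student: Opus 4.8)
The plan is to combine the lower bound from the preceding theorem with the known sublinear upper bound when $\alpha =2$, so that the statement reduces to an elementary analysis of the exponent $\frac{2m-\alpha m-4+3\alpha }{2\alpha }$ as a function of $m$. First I would observe that the quantity in the exponent can be rewritten as $\frac{(2-\alpha)m+3\alpha-4}{2\alpha}=\frac{2-\alpha}{2\alpha}\,m+\frac{3\alpha-4}{2\alpha}$, which is a linear (affine) function of $m$ with slope $\frac{2-\alpha}{2\alpha}$. When $\alpha<2$ this slope is strictly positive, so the lower bound $2^{\frac{2-\alpha}{2\alpha}m+\frac{3\alpha-4}{2\alpha}}$ grows exponentially in $m$; since the optimal constants $C_{m,\infty,\mathbf q}$ dominate this quantity by the previous theorem, they too must grow at least exponentially. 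This establishes the ``if'' direction (more precisely, that $\alpha<2$ implies exponential growth of the optimal constants).

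For the converse I would use the case $\alpha=2$, where the multiple exponent is $\mathbf q=(2,\beta_m,\dots,\beta_m)$. A short computation gives $\beta_m=\frac{4m-4}{2m}=\frac{2(m-1)}{m}$ when $\alpha=2$, so $\mathbf q$ is in fact (a permutation of) the classical Bohnenblust--Hille exponent $\left(\frac{2m}{m+1},\dots,\frac{2m}{m+1}\right)$ up to the position of the single entry equal to $2$ — and in any case it is a legitimate multiple exponent of the generalized Bohnenblust--Hille inequality. For such exponents the best known upper bounds for $C_{m,\infty,\mathbf q}$ are of polynomial order (the excerpt quotes $m^{0.36482}$ for the classical exponents, coming from \cite{bohr}), hence strictly sublinear in the sense of not growing exponentially. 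So when $\alpha=2$ the optimal constants do not grow exponentially. Together with the first paragraph this gives the stated equivalence.

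The main obstacle, as usual in such ``sharpness'' statements, is making precise what ``exponential growth'' means and citing the correct upper bound in the boundary case $\alpha=2$: one must be sure that the polynomial upper bounds from \cite{bohr, alb} genuinely apply to the mixed exponent $(2,\beta_m,\dots,\beta_m)$ and not only to the fully symmetric classical exponent. This is handled by the monotonicity/inclusion properties of the mixed $\ell_p$-norms (a smaller outer exponent gives a larger norm, so controlling the symmetric case controls this one, modulo reordering the coordinates, which is permissible since the Bohnenblust--Hille-type inequalities are invariant under permutations of the variables). Once that is in place, the rest is the trivial observation that an affine exponent of $2$ produces an exponentially growing sequence exactly when its slope is positive, i.e. exactly when $\alpha<2$.
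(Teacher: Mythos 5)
Your first direction ($\alpha<2$ implies exponential growth) is fine and is exactly what the paper does: the lower bound $2^{\frac{(2-\alpha)m+3\alpha-4}{2\alpha}}$ from the preceding theorem has a positive slope in $m$ precisely when $\alpha<2$ (and the growth cannot exceed exponential since $C_{m,\infty,\mathbf q}\leq(\sqrt2)^{m-1}$ for every admissible $\mathbf q$). The problem is the converse direction, $\alpha=2$. Your computation $\beta_m=\frac{2m-2}{m}$ is correct, but $(2,\frac{2m-2}{m},\ldots,\frac{2m-2}{m})$ is \emph{not} a permutation of the classical exponent $(\frac{2m}{m+1},\ldots,\frac{2m}{m+1})$ (for $m=2$ it is $(2,1)$ versus $(\frac43,\frac43)$), so the bound $m^{0.36482}$ from \cite{bohr} does not apply to it directly. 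The proposed repair also fails: monotonicity of mixed norms cannot transfer the classical bound, because the comparison is not coordinatewise in either direction ($2>\frac{2m}{m+1}$ while $\frac{2m-2}{m}<\frac{2m}{m+1}$); in fact no two distinct admissible multiple exponents can dominate each other coordinatewise, since both lie on the hyperplane $\frac1{q_1}+\cdots+\frac1{q_m}=\frac{m+1}{2}$. Nor are the optimal constants invariant under permuting the entries of $\mathbf q$: the position of each exponent in the nesting matters, and this is precisely the phenomenon the paper exhibits --- the constant for $(1,2,\ldots,2)$ is $(\sqrt2)^{m-1}$, exponential, while related rearranged or nearby exponents have subpolynomial constants. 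So the step ``controlling the symmetric case controls this one, modulo reordering'' has no justification and is where the argument breaks.

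The paper closes this direction differently. First, the Khinchine-based argument in the proof of \cite[Proposition 3.1]{bohr}, run with the optimal Khinchine constants, shows that the constants for the exponent $(\frac{2m-2}{m},\ldots,\frac{2m-2}{m},2)$, i.e.\ with the entry $2$ in the \emph{innermost} position, grow sublinearly. Second, Minkowski's inequality (used as in \cite{alb, n}) moves the larger exponent $2$ from the innermost to the outermost position at no cost: the mixed norm with outermost exponent $2$ is dominated by the one with $2$ innermost, so $C_{m,\infty,(2,\frac{2m-2}{m},\ldots,\frac{2m-2}{m})}$ is dominated by the constant for $(\frac{2m-2}{m},\ldots,\frac{2m-2}{m},2)$ and hence is also sublinear. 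If you want to salvage your write-up, replace the appeal to the classical exponent and ``permutation invariance'' by this Khinchine-plus-Minkowski argument (or by the interpolation machinery of \cite{alb}); the elementary analysis of the exponent in the first direction can stay as is.
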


\begin{proof}
The case $\alpha <2$ is done in the above theorem (note that the growth of
the optimal constants can not be bigger than exponential because from \cite%
{alb} we know that $C_{m,\infty ,\mathbf{q}}\leq \left( \sqrt{2}\right)
^{m-1}$ regardless of the $\mathbf{q.}$

If $\alpha =2$ we obtain $\mathbf{q}=(2,\frac{2m-2}{m},...,\frac{2m-2}{m}),$
and in this case the optimal constants have a sublinear growth. In fact,
from the proof of \cite[Proposition 3.1]{bohr}, using the optimal constants
of the Khinchine inequality we conclude that the optimal constants
associated to $(\frac{2m-2}{m},...,\frac{2m-2}{m},2)$ have a sublinear
growth; this is a by now classic consequence of the Khinchine inequality. By
using the Minkowski inequality as in \cite{alb, n} we can move the number $2$
to the first position and conclude that the constants associated to $\mathbf{%
q}=(2,\frac{2m-2}{m},...,\frac{2m-2}{m})$ are dominated by the constants
associated to $(\frac{2m-2}{m},...,\frac{2m-2}{m},2)$, and thus have a
sublinear growth.
\end{proof}

\textbf{Acknowledgment.} The author thanks the referee for the corrections
that improved the final presentation of this paper.

\bigskip

\end{document}